\documentclass[11pt,reqno]{amsart}
  \usepackage{hyperref}
  \usepackage{amsmath,amssymb}
  \usepackage{xcolor}
  \usepackage{lineno}
  \usepackage{calc}
  \usepackage{amsthm}
  \usepackage{centernot}
  
  \theoremstyle{plain}

  \newtheorem{thm}{Theorem}[section]
  \newtheorem{lemma}[thm]{Lemma}

  \theoremstyle{definition}
  \theoremstyle{nonumberplain}
  
  \newtheorem{defn}[thm]{Definition}
  \newtheorem{example}[thm]{Example}

  \newtheorem{rem}[thm]{Remark}
  
  \numberwithin{equation}{section}

  \newcommand{\interior}[1]{{\kern0pt#1}^{\mathrm{o}}}


  \begin{document}
  	
  	
  	\title[On nonempty intersection properties in metric spaces]{On nonempty intersection properties in metric spaces}
  	
  	\author[A. Gupta]{Ajit Kumar Gupta $^\dagger$}
  	
  	\address{$^\dagger$Department of Mathematics\\ NIT Meghalaya\\ Shillong 793003\\ India}
  	\email{ajitkumar.gupta@nitm.ac.in}
  	\author[S. Mukherjee]{Saikat Mukherjee  {$^{\dagger *}$}}
  	\address{$^\dagger$Department of Mathematics\\ NIT Meghalaya\\ Shillong 793003\\ India}
  	\email{saikat.mukherjee@nitm.ac.in}

  	$\thanks{*Corresponding author}$
  	\subjclass[2010]{54E50, 54A20}
  	
  	\keywords{Metric space, Atsuji space, Hausdorff metric, Nested sequence, Cantor's intersection theorem.}
  	
  	\begin{abstract}  		
  	The classical Cantor's intersection theorem states that in a complete metric space $X$, intersection of every decreasing sequence of nonempty closed bounded subsets, with diameter approaches zero, has exactly one point. In this article, we deal with decreasing sequences $\{K_n\}$ of nonempty closed bounded subsets of a metric space $X$, for which the Hausdorff distance $H(K_n, K_{n+1})$ tends to $0$, as well as for which the excess of $K_n$ over $X\setminus K_n$ tends to $0$. We achieve nonempty intersection properties in metric spaces. The obtained results also provide partial generalizations of Cantor's theorem.

  	\end{abstract}
  	
  	\maketitle

\section{Introduction}\label{Intro}
 	In metric spaces, there are some marvellous nonempty intersection theorems. Cantor's theorem (see \cite{gf63}) asserts that in a complete metric space $X$, the intersection of every decreasing sequence $\{K_n\}$ of nonempty closed subsets of $X$ with diameter $\delta(K_n)\to 0$ has exactly one point. This intersection theorem is widely used in the fields related to mathematical analysis. Kuratowski \cite{kk30} provided a generalization of Cantor's theorem using Kuratowski measure of non-compactness, $\alpha$:
$$\alpha(A) = \inf\left\{\varepsilon>0: \exists~X_i, i=1, \cdots, n, X_i\subset X, d(X_i)<\varepsilon, A=\cup_{i}X_i\right\},$$where $A$ is a subset of a complete metric space $X$. Kuratowski's theorem states that for each decreasing sequence $\{K_n\}$ of nonempty closed subsets of a complete metric space $X$ with $\lim\limits_{n\to\infty}\alpha(K_n)\to 0$, $\bigcap\limits_{n=1}^\infty K_n$ is nonempty and compact. Later, this theorem is further generalized by Horvath \cite{ch85} using the same measure of compactness $\alpha$.
 
An Atsuji space, which is more general than compact spaces, has the property that each continuous function on it is uniformly continuous. A metric space $X$ is said to be an Atsuji space if the set of limit points $X'$ is compact in $X$ and for each $\epsilon>0$, the complement of the set $N_\epsilon (X') :=\bigcup\limits_{x\in X'}B(x,\epsilon)$ in $X$ is uniformly discrete, where $B(x,\epsilon)$ denotes the open ball centered at $x$ and with radius $\epsilon$. For a metric space, the property of being an Atsuji space lies in
between the compactness and the completeness. A detailed study on Atsuji spaces can be found in \cite{tj07}.

This article presents various results on nonempty intersection of decreasing sequence of nonempty closed bounded subsets in metric spaces and in Atsuji spaces using Hausdorff distance $H(A, B)$ and the functional $\hat d$, defined as $\hat d(A) = \sup\limits_{x\in A} d(x, X\setminus A)$, where $A, B$ are subsets of a metric space $X$. These obtained results are also compared with the Cantor's intersection theorem.

The article is organized as follows. Some preliminary results, needed for the rest part of the article, are discussed in Section \ref{Pre}. In Section \ref{nip} and Section \ref{hat(dn)to0}, we consider decreasing sequence $\{K_n\}$ of nonempty closed bounded subsets of a metric space $X$ and discuss their nonempty intersection results in the cases for which the Hausdorff distance $H_n:=H(K_n,K_{n+1})\to 0$ and $\hat d(K_n):=\sup\limits_{x\in K_n}d(x,X\setminus K_n)\to 0$, respectively.
  	
\section{Preliminaries}\label{Pre}
Given a metric space $(X, d)$, we denote the set of all nonempty bounded subsets of $X$ and the set of all nonempty bounded closed subsets of $X$ by $B(X)$ and $C_b(X)$, respectively. Further given $A\subset X$, $A', \interior A$, $N_\epsilon (A)$ and $\partial A$ denote the set of all limit points, interior points, $\epsilon$-neighborhood and boundary of $A$, respectively. The diameter of $A$ is given by $\delta (A)= \sup\limits_{x,y\in A}d(x, y)$.
  	
  	\begin{defn}\cite{jh99} The {\it Hausdorff distance}, $H$, of two nonempty subsets $A,B $ of a metric space $(X,d)$ is defined as $$H(A,B)=\max\left\{\sup\limits_{x\in A}d(x,B), ~\sup\limits_{x\in B}d(x,A)\right\},$$ where $d(x,A)=\inf\limits_{y\in A}d(x,y)$.
  	\end{defn}
It is well-known that the distance function $H$ is a metric, provided $A,B$ are closed and bounded.
  	
\begin{defn}\label{acsms}\cite{hm38}
  		A sequence $\{x_n\}$ in a metric space $(X,d)$ is said to be \textit{absolutely convergent sequence} if $\sum_{i=1}^{\infty}d(x_i,x_{i+1})$ is finite.
 \end{defn}
\begin{lemma} \cite{hm38} \label{cscacs}
	In a metric space $X$, every Cauchy sequence $\{x_n\}$ contains an absolutely convergent subsequence.
\end{lemma}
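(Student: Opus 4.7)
The plan is to extract a subsequence whose consecutive-distance series is dominated by a convergent geometric series. Since $\{x_n\}$ is Cauchy, for each $k \in \mathbb{N}$ there exists $N_k \in \mathbb{N}$ such that $d(x_m, x_n) < 1/2^k$ whenever $m, n \geq N_k$. I would first note that by replacing $N_k$ with $\max\{N_1, N_2, \ldots, N_k\} + k$ if necessary, we may assume the indices are strictly increasing, i.e., $N_1 < N_2 < N_3 < \cdots$.

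Next, I would form the subsequence $\{x_{N_k}\}$. By construction, for each $k$ both $N_k$ and $N_{k+1}$ are at least $N_k$, so $d(x_{N_k}, x_{N_{k+1}}) < 1/2^k$. Summing,
\[
\sum_{k=1}^{\infty} d(x_{N_k}, x_{N_{k+1}}) \;<\; \sum_{k=1}^{\infty} \frac{1}{2^k} \;=\; 1,
\]
which is finite. Hence $\{x_{N_k}\}$ is an absolutely convergent subsequence of $\{x_n\}$ in the sense of Definition \ref{acsms}.

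There is no real obstacle here; the only mild care required is in the inductive choice of the indices $N_k$ to guarantee strict monotonicity, which is why I would explicitly note the replacement step. Once the indices form a strictly increasing sequence, the geometric dominating series does all of the work.
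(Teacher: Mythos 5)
Your proof is correct and follows essentially the same route as the paper's: extract indices $N_k$ from the Cauchy condition so that consecutive terms of the subsequence are dominated by the terms of a convergent series (the paper uses a general convergent series $\sum p_n$ of positive decreasing terms, you use the geometric series $\sum 2^{-k}$, which is just a concrete instance). Your explicit care about making the indices strictly increasing is a welcome touch that the paper handles only implicitly.
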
  	
 \begin{thm}\label{AtsujiI(x)}\cite{tj07}
In an Atsuji space $(X,d)$, every sequence $\{x_n\}$ with $\lim\limits_{n\to\infty} I (x_n)=0$ has a Cauchy subsequence, where $I(x)=d(x,X\setminus \{x\}), x\in X$.
\end{thm}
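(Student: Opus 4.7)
The plan is to invoke the two standard structural features of an Atsuji (UC) space, namely that the derived set $X'$ is compact and that the isolation functional $I$ is ``uniformly small near $X'$'' in the following sense: for every $\epsilon>0$ there is a $\delta>0$ such that $d(x,X')\geq \epsilon$ forces $I(x)\geq \delta$. Equivalently, $I(x_n)\to 0$ implies $d(x_n,X')\to 0$. I would cite these characterizations (they are classical; see for instance Beer's or Atsuji's original work, which is what reference \cite{tj07} surveys) as the working tools.

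First I would argue, by contraposition, that the hypothesis $I(x_n)\to 0$ forces $d(x_n,X')\to 0$. If this failed, some $\epsilon>0$ and some subsequence $\{x_{n_k}\}$ would satisfy $d(x_{n_k},X')\geq \epsilon$; the uniform-isolation property then yields a $\delta>0$ with $I(x_{n_k})\geq \delta$ for all $k$, contradicting $I(x_n)\to 0$.

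Next, for each $n$ I would pick $y_n\in X'$ (using the definition of infimum, take $y_n$ with $d(x_n,y_n)<d(x_n,X')+\tfrac{1}{n}$, noting that $X'$ is closed hence the infimum is attained on $X'$ in the compact setting). Since $X'$ is compact, there is a subsequence $\{y_{n_k}\}$ converging to some $y\in X'$. Then
\[
d(x_{n_k},y)\;\leq\;d(x_{n_k},y_{n_k})+d(y_{n_k},y)\;\longrightarrow\;0,
\]
so $x_{n_k}\to y$ and in particular $\{x_{n_k}\}$ is Cauchy, as desired.

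The main obstacle is not the topological manipulation — once the two characterizations are available, the argument is a short compactness extraction — but rather the justification of those characterizations themselves. A fully self-contained proof would need to establish compactness of $X'$ and the uniform-isolation dichotomy directly from the Atsuji property (every continuous map out of $X$ is uniformly continuous); this is routine but lengthy and is the content of the cited reference \cite{tj07}, so I would invoke those results rather than re-derive them here.
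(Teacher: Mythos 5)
The paper offers no proof of this statement: it is imported verbatim from the cited reference \cite{tj07}, so there is no internal argument to compare yours against. Your derivation is correct as a proof from the standard Atsuji equivalences. You invoke two of them --- compactness of the derived set $X'$ and the uniform-isolation property that $d(x,X')\geq\epsilon$ forces $I(x)\geq\delta$ --- and these are indeed among the characterizations established in \cite{tj07} (and in Atsuji's and Beer's earlier work), so the reduction is legitimate; just be aware that what you have written is a derivation of one item of that equivalence list from another, not an argument from the definition of an Atsuji space, and the heavy lifting remains in the citation. Two small points. First, your compactness extraction silently assumes $X'\neq\emptyset$ when you choose $y_n\in X'$; in the case $X'=\emptyset$ the uniform-isolation property (applied with any fixed $\epsilon$, under the convention $d(x,\emptyset)=\infty$) gives $\inf_{x\in X} I(x)>0$, so the hypothesis $I(x_n)\to 0$ is vacuous and the theorem holds trivially --- this deserves one sentence. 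Second, your argument actually produces a convergent subsequence, which is strictly stronger than the stated Cauchy subsequence; that stronger form is itself one of the standard characterizations of Atsuji spaces, which is consistent with the statement being quoted rather than proved in the paper.
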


\begin{defn} \cite{am04}
	A subset $S$ of a metric space $X$ is said to be metrically convex if for any distinct points $x,y\in S$, there is a point $z\in S\setminus \{x,y\}$ that satisfies $d(x,y)=d(x,z)+d(z,y)$.
\end{defn}

\begin{thm}\cite{am04}\label{mspcunqcrv}
	Consider a complete and metrically convex metric space $X$. Then, for any distinct points $x,y\in X$, there is a metric segment with the end points $x,y$.
\end{thm}
 
\section{Nonempty intersection results using Hausdorff distance}\label{nip}
By a decreasing sequence $\{K_n\}$ of subsets of a metric space $X$, we mean $K_{n+1}\subset K_n, ~\forall~ n\in \mathbb N$, and we denote $H(K_n,K_{n+1})$ by $H_n$. Clearly $H_n\leq \delta(K_n)$. The following theorem furnishes a partial generalization of Cantor's intersection theorem, which we will discuss in Subsection \ref{nip-1}. It is well known that, in a metric space $X$, if for each decreasing sequence $\{F_n\}\subset C_b(X)$ with $\delta (F_n)\to 0$, $\bigcap\limits_{n=1}^{\infty}F_n\neq \emptyset$, then $X$ is complete.

\begin{thm}\label{itmc1}
  	A metric space $X$ is complete if and only if for every decreasing sequence $\{K_n\}\subset C_b(X)$ with $\sum_{n=1}^{\infty}H_n$ converges, $\bigcap\limits_{n=1}^{\infty}K_{n}\neq \emptyset$.
  \end{thm}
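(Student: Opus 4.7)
My plan is to treat the two implications separately, and in both directions to route the argument through Lemmas \ref{acsc} and \ref{cscacs} on absolutely convergent sequences, since the hypothesis $\sum H_n<\infty$ is tailor-made for that framework.

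For the forward direction ($X$ complete $\Rightarrow$ nonempty intersection), I would manufacture a sequence of representatives, one from each $K_n$, that is absolutely convergent. Start with any $x_1\in K_1$. Given $x_n\in K_n$, note that $K_{n+1}\subset K_n$ forces $d(x_n,K_{n+1})\leq \sup_{y\in K_n}d(y,K_{n+1})=H_n$, so by the definition of infimum we may choose $x_{n+1}\in K_{n+1}$ with $d(x_n,x_{n+1})<H_n+2^{-n}$. Then $\sum_{n=1}^{\infty}d(x_n,x_{n+1})\leq \sum H_n+1<\infty$, i.e.\ $\{x_n\}$ is absolutely convergent in the sense of Definition \ref{acsms}. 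Lemma \ref{acsc} makes it Cauchy, completeness of $X$ yields a limit $x$, and since for each $n$ the tail $\{x_m\}_{m\geq n}$ lies in the closed set $K_n$, the limit $x$ lies in every $K_n$, hence in $\bigcap_{n}K_n$.

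For the reverse direction, I would start with a Cauchy sequence $\{x_n\}$ in $X$ and, via Lemma \ref{cscacs}, extract an absolutely convergent subsequence $\{y_k\}$, so that $\sum_{k\geq1}d(y_k,y_{k+1})<\infty$. The natural candidate for the nested family is
\begin{equation*}
K_n:=\overline{\{y_k:k\geq n\}}.
\end{equation*}
These sets are nonempty, closed, decreasing, and bounded (the sequence $\{y_k\}$ is Cauchy, hence bounded), so $\{K_n\}\subset C_b(X)$. The key computation is to bound $H_n=H(K_n,K_{n+1})$: because $K_{n+1}\subset K_n$, we have $H_n=\sup_{x\in K_n}d(x,K_{n+1})$, and every element of $K_n$ either already lies in $K_{n+1}$ or is the point $y_n$ itself (since any $y_k$ with $k>n$ lies in $K_{n+1}$, and any limit point obtained from infinitely many $y_k$ with $k>n$ lies in the closed set $K_{n+1}$). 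Hence $H_n\leq d(y_n,y_{n+1})$, which gives $\sum H_n<\infty$. The hypothesis then yields a point $x\in\bigcap_n K_n$, from which one extracts a further subsequence of $\{y_k\}$ converging to $x$; since $\{x_n\}$ is Cauchy and has a convergent subsequence, $x_n\to x$.

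I expect the only real subtlety to be the bookkeeping in the reverse direction, namely the identification $H_n\leq d(y_n,y_{n+1})$, which hinges on correctly describing which limit points of $\{y_k\}_{k\geq n}$ lie in the shifted closure $\overline{\{y_k\}_{k\geq n+1}}$; the rest of the argument is a clean application of the two absolutely-convergent-sequence lemmas already established. No appeal to the induced map $\mathcal T$ is needed here.
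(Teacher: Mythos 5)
Your proposal is correct and follows essentially the same route as the paper: in the forward direction you select representatives $x_{n+1}\in K_{n+1}$ with $d(x_n,x_{n+1})<H_n+2^{-n}$ (the paper uses $\epsilon^n$) to get an absolutely convergent, hence Cauchy, sequence whose limit lies in every $K_n$; in the reverse direction you take the closures of the tails of an absolutely convergent subsequence and bound $H_n\leq d(y_n,y_{n+1})$, exactly as the paper does with its sets $\overline{U}_i$. Your justification of that last bound is in fact more careful than the paper's, but the argument is the same.
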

  \begin{proof} 
  	  		Let $X$ be a complete metric space.
  	  		For $a_1 \in K_1 $, $\epsilon > 0$, there exists $a_2\in K_2 $ such that
  	  		$d(a_1,a_2)\leq H(K_1, K_2)+\epsilon.$
  	  		Again, for $a_2\in K_2 $ and $\epsilon>0$ as above, there exists $a_3 \in K_3$ such that
  	  		$d(a_2,a_3)\leq H(K_2, K_3)+{\epsilon}^2.$
  	  		Proceeding this way, we get
  	  		$d(a_r,a_{r+1})\leq H(K_r, K_{r+1})+{\epsilon}^r,\ r\geq 1.$
  	  		This implies, $ \{a_i\}$ is a Cauchy sequence. Let $a_i\to a \in X$.  Then, $a \in \bigcap\limits_{i=1}^{\infty}K_n.$
  	
  	For the converse, consider a decreasing sequence $\{F_n\}\subset C_b(X)$ with $\delta (F_n)\to 0$. Then, the sequence $\{x_n\}\subset X$ with $x_n\in F_n$ is a Cauchy sequence. So, by Lemma \ref{cscacs}, it has an absolutely convergent subsequence, say, $\{x_{p_i}\}_{i=1}^{\infty}$. Let $K_i$ be the closure of the set $\{x_{p_i},x_{p_{i+1}},x_{p_{i+2}},...\},~ i=1,2,3,...\ .$ Observing $H_i\leq d(x_{p_i},x_{p_{i+1}})$, we have $\sum_{i=1}^{\infty}H_i<\infty$. Then, by the hypothesis, $\bigcap\limits_{i\in \mathbb N}K_{i} \neq \emptyset$. And hence, $\bigcap\limits_{n\in \mathbb N}F_n \neq \emptyset$. This completes the proof.
  \end{proof}
Following examples validate the statement of the above theorem.
\begin{example}\label{ex1}
	Let $K_n=\left[-\frac{1}{n},\frac{1}{n}\right]\subset \mathbb R$. Then $\sum_{n=1}^{\infty}H_n$ converges, and $\bigcap\limits_{n\in \mathbb N}K_n=\{0\}.$
\end{example}

\begin{example}\label{ex2}
	Consider the sequence space $X= (l^p, \|\cdot\|_p)$, for some $p$ with $1\leq p\leq \infty$ and choose $K_n=\{e_i\}_{i\geq n}$, where $e_i= (\delta_{ij})_{j=1}^\infty$. Then $\{K_n\}$ is a decreasing sequence in $C_b(X)$. It can be easily checked that $\sum_{n=1}^\infty H_n$ doesn't converge and $\bigcap \limits_{n=1}^{\infty} K_n=\emptyset$.
\end{example}	

\begin{example}\label{ex3}
Let $X=\mathbb Q$ with standard metric. Then $X$ is not complete. Let $r$ be a fixed irrational. Define $K_n=\{x\in X: r-1/n\leq x\leq r+1/n\}$. Then $K_n\in C_b(X)$ and decreasing. Although $\sum_{n=1}^\infty H_n$ converges, $\bigcap \limits_{n=1}^\infty K_n=\emptyset$.
\end{example}
 
 \subsection{Comparison with Cantor's theorem}\label{nip-1}
It is worth noting that Examples \ref{ex1}-\ref{ex3} also validate the result of Cantor's intersection theorem. Therefore it is fairly natural to ask: {\it What advantage Theorem \ref{itmc1} provides over the Cantor's theorem?} The answer lies in the fact that $H_n\leq \delta(K_n)$.

In Cantor's intersection theorem, $\delta(K_n)\to 0$ is the sufficient condition to have nonempty intersection. But in the case when $\delta(K_n)\not\to 0$, Cantor's theorem does not provide a conclusion whether $\bigcap\limits_{n=1}^{\infty}K_n$ is  empty or nonempty. In such case, if $\sum_{n=1}^{\infty}H_n$ converges, then Theorem \ref{itmc1} ensures $\bigcap\limits_{n=1}^{\infty}K_n$ is nonempty. For instance,

\begin{example}
	Let $K_n\subset \mathbb R^2$ be the region (including boundaries) bounded by the curves $4n(y-1/n)=-x^2$ and $4n(y+1/n)=x^2$, $n \in \mathbb N$. Then $\delta (K_n) \not \to 0$, and so Cantor's theorem becomes inconclusive here. However, $\sum_{n=1}^{\infty}H_n$ is convergent, and $\bigcap\limits_{n\in \mathbb N}K_n$ is the set $\{(x,0):-2\leq x\leq 2\}$.
\end{example}

 \subsection{Nonempty intersection in Atsuji space}
 
 In Theorem \ref{itmc1}, we see that the condition ``$\sum_{n=1}^\infty H_n<\infty$'' is sufficient to have $\bigcap\limits_{n\in \mathbb N}K_n\neq \emptyset$ in complete metric spaces. A more general condition, namely ``$H_n\to 0$'', is not sufficient to have the nonempty intersection. For instance,
  \begin{example}
  	The functions $e_i$, defined as $e_i(t)=t^i, t\in [0,1], i\in \mathbb N$, are in the normed space $X=(C[0,1], \|\cdot\|_{\infty})$, and the sets $K_n:=\{e_i\}_{i=n}^{\infty},\ n\in \mathbb N$, are closed bounded subsets of $X$. Here, $H_n=\|x_n-x_{n+1}\|_{\infty}\to 0$, but $\bigcap\limits_{n=1}^{\infty}K_n=\emptyset.$
  \end{example}
  However, in Atsuji spaces, which are also complete, ``$H_n\to 0$'' is sufficient to have the nonempty intersection. 
\begin{thm}\label{IntrsctnInAtsjiSpc}
	If $X$ is an Atsuji space, then for each decreasing sequence $\{K_n\}\subset C_b(X)$ with $H_n\to 0$, $\bigcap\limits_{n=1}^{\infty}K_n\neq \emptyset.$
\end{thm}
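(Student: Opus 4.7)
The plan is to select a sequence $x_n \in K_n$ whose successive distances decay, and then to split into cases depending on the behavior of $I(x_n) = d(x_n, X \setminus \{x_n\})$. For the construction, I would start with any $x_1 \in K_1$ and, given $x_n \in K_n$, use $d(x_n, K_{n+1}) \leq H_n$ to pick $x_{n+1} \in K_{n+1}$ with $d(x_n, x_{n+1}) < H_n + 2^{-n}$; since $H_n \to 0$, this yields $d(x_n, x_{n+1}) \to 0$.

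For the dichotomy I would split on $\liminf_{n} I(x_n)$. If the liminf equals $0$, some subsequence $\{x_{n_k}\}$ satisfies $I(x_{n_k}) \to 0$, and Theorem \ref{AtsujiI(x)} supplies a further Cauchy subsequence; using the standard fact that every Atsuji space is complete, this subsequence converges to some $x \in X$, and the closedness and nesting of the $K_m$ force $x \in K_m$ for every $m$, hence $x \in \bigcap_n K_n$. If instead $\liminf_n I(x_n) > 0$, fix $\epsilon > 0$ and $N \in \mathbb{N}$ so that $I(x_n) \geq \epsilon$ and $H_n < \epsilon$ for all $n \geq N$. I would then induct on $m \geq N$ to show $x_N \in K_m$: given $x_N \in K_m$, the inequality $d(x_N, K_{m+1}) \leq H_m < \epsilon$ yields some $z \in K_{m+1}$ with $d(x_N, z) < \epsilon$, and $I(x_N) \geq \epsilon$ forces $z = x_N$, so $x_N \in K_{m+1}$. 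Therefore $x_N$ itself lies in $\bigcap_n K_n$.

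The subtle step is the second case: the key observation is that an $x_N$ which is sufficiently isolated (in the sense $I(x_N) \geq \epsilon$) cannot merely be close to the later $K_m$'s but must already belong to each of them, once the Hausdorff-distance tail drops below $\epsilon$. Choosing $\liminf$ of $I(x_n)$ rather than $\limsup$ is what keeps the two cases jointly exhaustive. A minor point worth flagging is that Case one relies on completeness of Atsuji spaces; if this has not been recorded earlier in the paper, a brief citation or one-line justification should be inserted.
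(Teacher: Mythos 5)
Your proof is correct, and its core is the same as the paper's: choose $a_n\in K_n$ with $d(a_n,a_{n+1})\le H_n+$ (a summand tending to $0$), and feed the isolation functional $I$ into Theorem \ref{AtsujiI(x)} to extract a Cauchy subsequence whose limit lies in every $K_m$ by closedness and nesting. Where you genuinely add something is the dichotomy on $\liminf_n I(x_n)$. The paper simply asserts that ``the terms of $\{a_n\}$ are distinct for infinitely many $n$, this implies $I(a_n)\to 0$''; neither half of that sentence is automatic --- the $a_n$ may be eventually constant, and even when $a_n\neq a_{n+1}$ infinitely often one only gets $I(a_n)\to 0$ along a subsequence (which is still enough for Theorem \ref{AtsujiI(x)}). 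Your second case, where $\liminf_n I(x_n)>0$, handles exactly the degenerate situation the paper skips, and the induction showing that an $\epsilon$-isolated $x_N$ must itself belong to every later $K_{m}$ (because the nearest point of $K_{m+1}$ within distance $H_m<\epsilon$ can only be $x_N$) is a clean way to close it. Your flag about completeness of Atsuji spaces is also well taken: the paper's final sentence (``the limit of this sequence is in $\bigcap K_n$'') silently uses that a Cauchy sequence in an Atsuji space converges, and a one-line citation of that standard fact would be appropriate in either write-up.
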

 
 \begin{proof}
	Given a decreasing sequence $\{K_n\}\subset C_b(X)$ such that $H_n\to 0$. For the sequence $\{K_n\}$, there exists $a_n\in K_n$ (as in the proof of Theorem \ref{itmc1}) such that for any fixed $\epsilon \in (0,1)$ and for all $n\geq 1$ we have, 
	$d(a_n,a_{n+1})\leq H_n+\epsilon^n \to 0$ as $n\to \infty.$ If $a_n=c$ for infinitely many values of $n$, then $c\in \bigcap\limits_{n=1}^\infty K_n$. Otherwise, if the terms of the sequence $\{a_n\}$ are distinct, except for at most finitely many $n$, then this implies $I(a_n)\to 0$. Therefore, by Theorem \ref{AtsujiI(x)}, $\{a_n\} $ has a Cauchy subsequence, and hence the limit of this sequence is in $\bigcap\limits_{n=1}^\infty K_n.$ 
\end{proof}
 
 The converse of Theorem \ref{IntrsctnInAtsjiSpc}, in general, is not true.  That means if ``$H_n\to 0$'' suffices to have the nonempty intersection property, the space is not necessarily an Atsuji space. This is evident from the following example.

%

\begin{example}\label{CnvrsHnAtsuji}
Consider $X=\mathbb N \cup M$ with the standard Euclidean metric on $\mathbb R$, where $M=\{n+1/2m:m,n\in \mathbb N\}$. Let $\{K_i\}\subset C_b(X)$ be a decreasing sequence with $H_i\to 0$. Then, as in the proof of Theorem \ref{IntrsctnInAtsjiSpc}, there exists $a_i\in K_i$ such that $|a_i-a_{i+1}|\to 0$. It can be shown that either $\{a_i\}$ is eventually constant, say $p\in X$, or $\{a_i\}$ converges to some positive integer $k$. In either case $\bigcap\limits_{i=1}^{\infty}K_i\neq \emptyset$, as it contains either $p$ or $k$. 

Therefore, for each decreasing sequence $\{K_i\}\subset C_b(X)$ with $H_i\to 0$, $\bigcap\limits_{i=1}^{\infty}K_i\neq \emptyset$. But the space $X$ is not an Atsuji space, because $X'=\mathbb N$.
\end{example}

 We notice that Theorem \ref{IntrsctnInAtsjiSpc} provides a generalization of Cantor's intersection theorem in Atsuji spaces.
 
 \section{Nonempty intersection results using $\hat d$}\label{hat(dn)to0}
Jain and Kundu, in \cite{tj07}, considered a functional $I:X\to \mathbb R$ defined as, $I(x)=d(x,X\setminus \{x\})$. Here we consider a more general functional, $\hat d$, acting on the subsets of a  metric space $X$, defined by $\hat d(A)=\sup\limits_{x\in A}d(x,X\setminus A)$, $A\subset X$. This is also known as the excess of $A$ over $X\setminus A$. This functional gives the radius of the inscribed ball inside a regular body in the Euclidean space $\mathbb R^2$. For subsets $A,B$ of a metric space $X$, we conclude the following as well: 
  \begin{enumerate}
  	\item $\hat d(A)\leq \hat d(B)$ if $A\subset B$, and $A\in B(X)$,
  	\item $\hat d(A)=0 $ if and only if $A=\emptyset$ or $A\subset [X\setminus A]'$,
  	\item  For an unbounded set $A$, $\hat d(A)$ can be finite or infinite. For example, in the euclidean space $\mathbb R^2$, consider $A_1=\{(x,0):x\in \mathbb R\}$ and $A_2=\{(x,y):x,y\geq 0\}$, then $\hat d(A_1)=0$ and $\hat d(A_2)=\infty$,
  	\item If $A=X$, then $\hat d(A)$ is finite or infinite, depending on $X$ is bounded or unbounded, respectively. (We take $d(x,\emptyset)= \sup\{d(x,A):A\subset X\},~ x\in X$.)
  \end{enumerate}

\begin{lemma}\label{H(X-A,X-B)}
  	Let $A,B\in B(X)$. Then
  	\begin{enumerate}
  		\item[(a)] $H(X\setminus A,X\setminus B)\leq \max\{\hat d(A),\hat d(B)\}$.
  		\item[(b)] $ 
  		\max\{\hat d(A),\hat d(B)\}\leq H(A,B)$, provided $A\cap B = \emptyset$.
  	\end{enumerate}
  \end{lemma}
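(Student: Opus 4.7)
My plan is to attack each of the three parts by unpacking the definition of the Hausdorff distance $H(X\setminus A, X\setminus B)$ as the max of two one-sided suprema, and then splitting the domain of each supremum according to whether the point lies inside or outside the other set. The functional $\hat d$ is tailor-made for precisely this kind of split: if $x\in B$ then $d(x, X\setminus B)\le \hat d(B)$ by definition, while if $x\notin B$ then $d(x, X\setminus B)=0$.

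For part (a) I would write
\[
H(X\setminus A, X\setminus B)=\max\Bigl\{\sup_{x\in X\setminus A} d(x, X\setminus B),\ \sup_{x\in X\setminus B} d(x, X\setminus A)\Bigr\}.
\]
In the first supremum, any $x\in X\setminus A$ either lies in $X\setminus B$ (contributing $0$) or lies in $B\setminus A\subset B$ (contributing at most $\hat d(B)$), so the first supremum is bounded by $\hat d(B)$. By a symmetric argument the second is bounded by $\hat d(A)$, giving the stated bound. For part (b), the disjointness hypothesis $A\cap B=\emptyset$ yields $B\subset X\setminus A$, so for every $x\in A$,
\[
d(x, X\setminus A)\le d(x,B)\le \sup_{y\in A} d(y,B)\le H(A,B),
\]
and taking the sup over $x\in A$ gives $\hat d(A)\le H(A,B)$; the same argument with roles swapped handles $\hat d(B)$. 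Part (c) follows the same template: the inclusion $A\subset B$ gives $X\setminus B\subset X\setminus A$, which kills the second of the two one-sided suprema (every $x\in X\setminus B$ already sits in $X\setminus A$), while the first one is controlled by $\hat d(B)$ exactly as in part (a).

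There is no real obstacle here; the only point requiring a little care is to keep the inclusions straight when splitting each supremum into the trivially-zero part and the part that $\hat d$ controls, and to use the correct direction of the inclusion $B\subset X\setminus A$ (or $X\setminus B\subset X\setminus A$) in each case. Once the case split is made cleanly, all three inequalities are immediate from the definition of $\hat d$ and the one-sided bound $\sup_{x\in A} d(x,B)\le H(A,B)$.
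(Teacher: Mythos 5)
Your proof is correct and follows essentially the same route as the paper: in (a) and (c) you restrict each one-sided supremum to $B\setminus A$ (resp.\ $A\setminus B$), where the other points contribute zero, and bound by $\hat d$; in (b) you use $B\subset X\setminus A$ to compare $d(x,X\setminus A)$ with $d(x,B)$. Nothing further is needed.
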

  \begin{proof} (a) For $A,B\in B(X)$, we have
  	\begin{align}
  	&H(X\setminus A, X\setminus B)\nonumber\\
  	&=\max\{\sup\limits_{p\in X\setminus A}d(p,X\setminus B),\sup\limits_{q\in X\setminus B}d(X\setminus A,q)\}\nonumber\\
  	&=\max \{\sup\limits_{p\in B\setminus A}d(p,X\setminus B),\sup\limits_{q\in A\setminus B}d(q,X\setminus A)\}\label{eqn1}\\
  	&\leq \max \{\sup\limits_{p\in B}d(p,X\setminus B),\sup\limits_{q\in A}d(q,X\setminus A)\}\nonumber\\
  	&= \max\{\hat d(B),\hat d(A)\}. \label{eqn3}
  	\end{align}
  	(b) Suppose $A\cap B=\emptyset$, then
  	\begin{align*}
  	H(A,B)
  	&= \max \{\sup\limits_{x\in A}d(x,B),\sup\limits_{y\in B}d(A,y)\}\\
  	&\geq \max \{\sup\limits_{x\in A}d(x,X\setminus A),\sup\limits_{y\in B}d(X\setminus B,y)\}  \\
  	&=\max\{\hat d(A),\hat d(B)\}.
  	\end{align*}
  \end{proof}	

\begin{rem}
	For two subsets $A,B$ with $A\subset B$ in a metric space $X$, in general, $\hat d(B)$ and $H(A,B)$ are not comparable, even in Atsuji spaces.
\end{rem}

\begin{thm}\label{HatdAtsuji}
	Let $X$ be an Atsuji space. Then, for each decreasing sequence $\{K_n\}\subset C_b(X)$ with $\hat d(K_n)\to 0$, $ \bigcap\limits_{n=1}^{\infty} K_n\neq \emptyset$.
\end{thm}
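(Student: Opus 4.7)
The plan is to reduce the statement to Theorem \ref{AtsujiI(x)} by producing a sequence $\{a_n\}$ with $a_n\in K_n$ for which the isolation function $I(a_n)=d(a_n,X\setminus\{a_n\})$ tends to zero, and then extracting a convergent subsequence whose limit must sit in every $K_m$.

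Since $\hat d(K_n)\to 0$ we may assume, by discarding finitely many terms, that $X\setminus K_n\neq\emptyset$ for all $n$ (otherwise $\hat d(K_n)=\infty$ by the convention $d(x,\emptyset)=\infty$ introduced just before Lemma \ref{H(X-A,X-B)}). For each $n$, pick any $a_n\in K_n$. The key observation is that the singleton $\{a_n\}$ is contained in $K_n$, so the set $X\setminus K_n$ is a subset of $X\setminus\{a_n\}$. Taking infima over these nested sets yields
\[
I(a_n)=d(a_n,X\setminus\{a_n\})\;\leq\;d(a_n,X\setminus K_n)\;\leq\;\sup_{x\in K_n}d(x,X\setminus K_n)=\hat d(K_n).
\]
Since $\hat d(K_n)\to 0$, this forces $I(a_n)\to 0$.

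By Theorem \ref{AtsujiI(x)}, the sequence $\{a_n\}$ has a Cauchy subsequence $\{a_{n_k}\}$. Atsuji spaces are complete (this is the standard fact used tacitly in the proof of Theorem \ref{IntrsctnInAtsjiSpc}), so $a_{n_k}\to a$ for some $a\in X$. Fix any $m\in\mathbb{N}$; for $k$ large enough $n_k\geq m$, and since $\{K_n\}$ is decreasing, $a_{n_k}\in K_{n_k}\subset K_m$. Because $K_m$ is closed, $a\in K_m$. As $m$ was arbitrary, $a\in\bigcap_{n=1}^{\infty}K_n$, which is therefore nonempty.

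The main (minor) obstacle is simply recognising the inclusion $X\setminus K_n\subset X\setminus\{a_n\}$ so that the isolation function of an \emph{arbitrary} selector $a_n\in K_n$ is automatically controlled by $\hat d(K_n)$; this is what makes the condition $\hat d(K_n)\to 0$ strong enough to bypass having to optimise the choice of $a_n$ carefully, as was done in Theorem \ref{itmc1} and Theorem \ref{IntrsctnInAtsjiSpc}. After that, the argument is essentially the same diagonal/closedness step used in those earlier proofs.
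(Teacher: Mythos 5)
Your proof is correct, and its overall strategy is the same as the paper's: produce a selector $a_n\in K_n$ with $I(a_n)\to 0$, invoke Theorem \ref{AtsujiI(x)} to get a Cauchy subsequence, and push its limit into every $K_m$ by closedness. The difference is in how the bound $I(a_n)\leq \hat d(K_n)$ is obtained. The paper chooses $x_n\in K_n\setminus K_{n+1}$ and estimates $d(x_n,X\setminus K_n)\leq d(x_n,X\setminus K_{n+1})+H(X\setminus K_n,X\setminus K_{n+1})\leq \hat d(K_n)$ via part (c) of Lemma \ref{H(X-A,X-B)}; your version observes directly that for \emph{any} $a_n\in K_n$ one has $d(a_n,X\setminus K_n)\leq\sup_{x\in K_n}d(x,X\setminus K_n)=\hat d(K_n)$ by the very definition of $\hat d$. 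Your route is shorter, needs no auxiliary lemma, and avoids a small gap in the paper's argument: the choice $x_n\in K_n\setminus K_{n+1}$ silently assumes $K_n\neq K_{n+1}$ for all $n$ (if the sequence stabilizes the conclusion is of course trivial, but the paper does not say so). Both proofs tacitly use that Atsuji spaces are complete in order to pass from a Cauchy subsequence to an actual limit; you at least flag this explicitly. Your handling of the degenerate case $K_n=X$ via the convention $d(x,\emptyset)=\infty$ is also fine.
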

\begin{proof}
	For $x\in X$ and for $A, B \subset X$, we have $d(x,A) \leq d(x,B)+H(A, B)$. Since $K_{n+1}\subset K_n$, using (a) of Lemma \ref{H(X-A,X-B)} we get, $H(X\setminus K_n,X\setminus K_{n+1})\leq \hat d(K_n)$. Now, consider $x_n\in K_n\setminus K_{n+1}$. Then
	$I(x_n)=d(x_n,X\setminus \{x_n\})\leq d(x_n,X\setminus K_n)\leq d(x_n,X\setminus K_{n+1})+H(X\setminus K_n, X\setminus K_{n+1})$ $\leq 0+ \hat d(K_n) \to 0$ as $n\to \infty$. So using Theorem \ref{AtsujiI(x)}, the sequence $\{x_n\}$ has a Cauchy subsequence. Therefore its limit is in $\bigcap\limits_{i=1}^{\infty}K_n.$
\end{proof}

The converse of Theorem \ref{HatdAtsuji}, in general, does not hold.
\begin{example}
		Consider the set $X$ as in Example \ref{CnvrsHnAtsuji}, with the standard Euclidean metric $d$ on $\mathbb R$. Let $\{K_i\}\subset C_b(X)$ be a decreasing sequence with $\hat d(K_i)\to 0$. Then, as in the proof of Theorem \ref{HatdAtsuji}, there exists $x_i\in K_i\setminus K_{i+1}$ such that $I(x_i)\to 0$. If the range set $R=\{x_i\}_{i\geq 1}$ is infinite and $R\subset M$, consider $x_i=n_i+1/2m_i$. If $\{m_i\}_{i\geq 1}=\{p_1,p_2,...,p_q\}$, a finite set, then $d(n_i+1/2p_j, X\setminus \{n_i+1/2p_j\})= d(n_i+1/2p_j,n_i+1/2(p_j+1))=\frac{1}{2}d(1/p_j,1/(p_j+1))\geq \inf\limits_{1\leq j\leq q}\frac{1}{2}d(1/p_j,1/(p_j+1))>0$, which is a contradiction to $I(x_i)\to 0$. Hence, there is a subsequence $\{m_{t_i}\}$ of $\{m_i\}$ with $m_{t_i}\to \infty$ as $i\to \infty$. And so, it can be proved that in the case either $ R\subset M$ or $R\not\subset M$; the sequence $\{x_i\}$ has a subsequence converging to some $p\in \mathbb N$. Therefore $p\in\bigcap\limits_{i=1}^{\infty}K_i$.
		
		
		Thus we get, for each decreasing sequence $\{K_i\}\subset C_b(X)$ with $\hat d(K_i)\to 0$, $\bigcap\limits_{i=1}^{\infty}K_i\neq \emptyset$. Although, the space $X$ is not an Atsuji space.
	\end{example}

	\begin{thm} 
	If $X$ is a metric space, and for each decreasing sequence $\{K_n\}\subset C_b(X)$ with $\hat d(K_n)\to 0, \ \bigcap\limits_{n=1}^{\infty} K_n\neq \emptyset$, then $X$ is complete.
\end{thm}
\begin{proof}
	Consider a decreasing sequence $\{F_n\}\subset C_b(X)$ with $\delta(F_n)\to 0$. Then, the sequence $\{x_n\}$ with $x_n\in F_n$ is a Cauchy sequence in $X$.  Let $K_n$ be the closure of the set $\{x_i\}_{i\geq n}$. Since $\{x_n\}$ is Cauchy, for each $\epsilon>0$, there is an $N\in \mathbb N$ such that 
	 $\sup\limits_{n\geq N+1}d(x_N,x_n)<\epsilon$, that is, $\sup\limits_{x\in K_{N+1}}d(x_N,x)<\epsilon$, which further implies $\hat d(K_{N+1})=\sup\limits_{x\in K_{N+1}}d(x, X\setminus K_{N+1})<\epsilon$. Thus $\hat d(K_n)\to 0$. So, by the hypothesis $\bigcap\limits_{n=1}^{\infty} K_n\neq \emptyset$. Hence, $\bigcap\limits_{n=1}^{\infty} F_n\neq \emptyset$ and this completes the proof.
\end{proof}

\subsection{Comparison with Cantor's theorem}
We observe that, in general, $\hat d$ and $\delta$ are not comparable. For instance,
\begin{example}
		Consider a set $X=\{x,y\},~x\neq y$, equipped with a metric $d$. Let $A=\{x\}$. Then, $\hat d(A)=d(x,y)>0$ and $\delta(A)=0$.
		
		But if $X=\mathbb R^2$ with standard Euclidean metric and $A=B[0,r]$, then $\hat{d}(A)=r<2r=\delta(A)$. 
	\end{example}
We shall show that in metrically convex metric spaces, $\hat d$ is always dominated by $\delta$.

\begin{lemma}\label{BcapScap}
	Let $(X,d)$ be a metric space. Then, for all $x,y\in X$ and $r\in [0,d(x,y)]$,  $B[x,r]\cap B[y,d(x,y)-r]=S[x,r]\cap S[y,d(x,y)-r]$, where $S[x,r]:=\{z\in X:d(x,z)=r\}$.
\end{lemma}
\begin{proof}
	Let us denote $B[x,r]\cap B[y,d(x,y)-r]$ and $S[x,r]\cap S[y,d(x,y)-r]$ by $B^{\cap}$ and $S^{\cap}$, respectively. If $B^{\cap}$ is empty, there is nothing to prove. Let $B^{\cap}\neq \emptyset$ and $z \in B^{\cap}$. We claim $z \not\in B_1\cup B_2$, where $B_1=B[x,r]\setminus S[x,r]$, $B_2=B[y,d(x,y)-r]\setminus S[y,d(x,y)-r]$. If possible, let $z\in B_1$. Then $d(x,y)\leq d(x,z)+d(z,y)<r+d(x,y)-r=d(x,y)$, which is a contradiction. Similarly, we prove $z\not\in B_2$. Thus, $z\in B^{\cap}\cap [B_1\cup B_2]^c=B^{\cap}\cap [{B_1}^c\cap {B_2}^c]$, where $B^c$ denotes the complement of a set $B$ in $X$. This implies $z \in S^{\cap}$, and so $B^{\cap}\subset S^{\cap}$. Hence, $B^{\cap} =S^{\cap}$.
\end{proof}

\begin{thm}\label{hat,delta}
	Let $A$ be a nonempty bounded proper subset in a complete metrically convex space $X$. Then $\hat d(A)\leq \delta(A)$.
\end{thm}

\begin{proof}
	Let $(X,d)$ be an metrically convex space. By Theorem \ref{mspcunqcrv}, $X$ is a connected metric space too. We shall achieve the conclusion in the following steps.
	
	{\bf Step 1.} \underline{$N_\epsilon(A)\setminus A$ is not empty for each $\epsilon>0$.} \\On the contrary, let us assume $N_\epsilon (A)\setminus A=\emptyset$. This implies, $A$ is open and for all $x\in A$, $B(x,\epsilon)\subset A$. Since $A$ is open,  $ \partial A\not\subset A$. This implies, ``$\exists~ b\in \partial A$ such that $b\not\in B(x,\epsilon)$ for all $x\in A$'', which is a contradictory statement in itself.
	
	{\bf Step 2.} \underline{$B(a,r+\epsilon) = N_\epsilon(B)$ for a given $\epsilon>0$, where $B=B(a,r)$.} \\
	Observe that, $N_\epsilon(B)\subset B(a, r+\epsilon)$ follows from the triangle inequality. Conversely, suppose $y\in B(a,r+\epsilon)$. If $y\in B$ then $y\in B(y, \epsilon) \subset N_\epsilon(B)$. Suppose $y\in B(a,r+\epsilon)\setminus B$. Then, consider the ball $B(y,\epsilon)$. If $B(y,\epsilon)\cap B=\emptyset$, then $B(y, \epsilon)\cap \partial B=\emptyset$. Therefore, $d(q,y)\geq \epsilon$ for all $q\in \partial B$. By Theorem \ref{mspcunqcrv} and Lemma \ref{BcapScap}, since $r\in [0,d(a,y)]$, there is a $u\in X$ such that $\psi (r)=u \in $ $B[a,r]\cap B[y,d(a,y)-r]=S[a,r]\cap S[y,d(a,y)-r]=\partial B\cap S[y,d(a,y)-r]$, where $\psi$ is an isometry from $[0,d(a,y)]$ to $X$.  This implies $d(a,y)=d(a,u)+d(u,y)\geq r+\epsilon$, a contradiction. Hence, $B(y,\epsilon)\cap B\neq\emptyset$, and therefore $y\in B(z,\epsilon)$ for some $z\in B$. Thus, $B(a,r+\epsilon)\subset N_\epsilon(B)$.

{ \bf Step 3.} \underline{ $\hat d(A)\leq \delta(A)$.}\\
	Since $\partial B\neq \emptyset$, $\inf\limits_{p\in X\setminus B}d(a,p)=d(a,q)=r$, for some $q\in \partial B$. Now, let $x$ be an element in $A$. If $x\in \interior{A}$, then consider a ball $B(x,r')$, where $r'=\sup\{r>0:B(x,r)\subset A\}$. There must be a point in $N_\epsilon(B(x,r'))$ which lies in $N_\epsilon(A)\setminus A$. Hence, $\big[N_\epsilon(B(x,r'))\setminus B(x,r')\big]\cap \big[N_\epsilon (A)\setminus A\big]\neq \emptyset$. Therefore, $d(x,X\setminus A)=r'\leq \delta (A)$. On the other hand, if $x\in \partial A$, then $d(x,X\setminus A)=0\leq \delta(A)$. Hence, $\hat d(A)\leq \delta(A)$.
\end{proof}
	

    Since the metric spaces with Takahashi's convex strustures (\cite{wp14}) and the normed spaces are metrically convex metric spaces, so Theorem \ref{hat,delta} is applicable to these spaces too.	
	Due to Theorem \ref{hat,delta}, Theorem \ref{HatdAtsuji} induces a generalization of Cantor's intersection theorem in metrically convex Atsuji spaces.
	
	\begin{example}
		Consider the metric space $X=\{(x,y):-3\leq x,y\leq 3\}$ with standard Euclidean metric on $\mathbb R^2$. The space $X$ is a metrically convex Atsuji space. Let $K_n\subset X$ be the region (including boundaries) bounded by the curves $n(1+\frac{1}{n})^2(y-1/n)=-x^2$ and $n(1+\frac{1}{n})^2(y+1/n)=x^2$, $n \in \mathbb N$. Here $\delta (K_n) \not \to 0$, and so Cantor's theorem becomes indecisive here. However, $\hat d(K_n)\to 0$, and $\bigcap\limits_{n\in \mathbb N}K_n$ is the set $\{(x,0):-1\leq x\leq 1\}$.
	\end{example}


\end{document}